\numberwithin{equation}{subsection}
\theoremstyle{plain}
\newtheorem{thm}[equation]{Theorem}
\newtheorem{prop}[equation]{Proposition}
\newtheorem{lem}[equation]{Lemma}
\newtheorem{cor}[equation]{Corollary}
\newtheorem*{cor*}{Corollary}
\newtheorem*{prob*}{Problem}
\newtheorem*{thm*}{Theorem}
\newtheorem*{thma*}{Theorem A}
\newtheorem*{thmb*}{Theorem B}
\newtheorem*{thmc*}{Theorem C}
\theoremstyle{definition}
\newtheorem{defn}[equation]{Definition}
\newtheorem{alg}[equation]{Algorithm}
\newtheorem{exm}[equation]{Example}
\theoremstyle{remark}
\newtheorem{rmk}[equation]{Remark}
\newenvironment{enumalg}
{\begin{enumerate}}
{\end{enumerate}}
\newcommand{\defi}[1]{\textit{\textsf{#1}}} 
\DeclareMathOperator{\Aut}{Aut}
\DeclareMathOperator{\Gal}{Gal}
\DeclareMathOperator{\Mon}{Mon}
\DeclareMathOperator{\ord}{ord}
\DeclareMathOperator{\PSL}{PSL}
\DeclareMathOperator{\GL}{GL}
\let\div\relax
\DeclareMathOperator{\div}{div}
\newcommand{\C}{\mathbb C}
\newcommand{\F}{\mathbb F}
\newcommand{\PP}{\mathbb P}
\newcommand{\Q}{\mathbb Q}
\newcommand{\Z}{\mathbb Z}
\newcommand{\Qbar}{{\mathbb Q}^{\textup{al}}}
\renewcommand{\L}{\mathscr L}
\newcommand{\Belyi}{Bely\u{\i}}
\newcommand{\calP}{\mathcal{P}}
\newcommand{\jv}[1]{{\color{red} \sf JV: [#1]}}
\newcommand{\sss}[1]{{\color{green} \sf S$^3$: [#1]}}
\newcommand{\jrs}[1]{{\color{cyan} \sf JRS: [#1]}}
\begin{document}

\title{A Database of Bely\u{i} Maps}

\author{Michael Musty, Sam Schiavone}
\address{Department of Mathematics, Dartmouth College, 6188 Kemeny Hall, Hanover, NH 03755, USA}
\email{michaelmusty@gmail.com, sam.schiavone@gmail.com}

\author{Jeroen Sijsling}
\address{Universit\"at Ulm, Institut f\"ur Reine Mathematik, D-89068 Ulm, Germany}
\email{jeroen.sijsling@uni-ulm.de}

\author{John Voight}
\address{Department of Mathematics, Dartmouth College, 6188 Kemeny Hall, Hanover, NH 03755, USA}
\email{jvoight@gmail.com}

\date{\today}

\begin{abstract}
  We use a numerical method to compute a database of three-point branched
  covers of the complex projective line of small degree.  We report on some
  interesting features of this data set, including issues of descent.
\end{abstract}

\maketitle

\section{Introduction}

\subsection{Motivation}

Let $X$ be a smooth, projective curve over $\C$.  A \defi{\Belyi\ map} on $X$ is a
nonconstant map $\phi \colon X \to \PP^1$ that is unramified away from
$\{0,1,\infty\}$.  
By a theorem of \Belyi\ \cite{Belyi} and Weil's descent theory \cite{Weil}, $X$ can be defined over the algebraic closure $\Qbar$ of $\Q$ if and only if $X$ admits a \Belyi\ map.  This remarkable
observation has led to a spurt of activity, with many deep questions still open
after forty years.  In his study of covers of the projective line minus three
points \cite{Deligne}, Deligne writes pessimistically:
\begin{quote}
  A.~Grothendieck and his students developed a combinatorial description
  (``maps'') of finite coverings...  
  It has not
  aided in understanding the Galois action.  We have only a few examples of
  non-solvable coverings whose Galois conjugates have been computed.
\end{quote}
Indeed, although significant mathematical effort has been expended in computing
\Belyi\ maps \cite{SijslingVoight}, there have been few systematic computations
undertaken.  

\subsection{Main result} \label{sec:mainresult}

In this article we seek to remedy this state of affairs. We address Deligne's second objection
by describing the uniform computation of a large catalogue of \Belyi\ maps of
small degree.  We utilize the numerical method of
Klug--Musty--Schiavone--Voight \cite{KMSV} and follow the combinatorial
description of Grothendieck. We make some preliminary observations about our data, but leave to future work a more detailed analysis of the Galois action on the maps in our catalogue.

A \defi{passport} is the data $(g,G,\lambda)$ consisting of a nonnegative
integer $g \in \Z_{\geq 0}$, a transitive permutation group $G \leq S_d$, and
three partitions $\lambda=(\lambda_0,\lambda_1,\lambda_\infty)$ of $d$. The
\defi{passport} of a \Belyi\ map is given by its genus, its monodromy group,
and the ramification degrees of the points above $0,1,\infty$. There is a
natural permutation action of $S_3$ on passports, so (without loss of
generality) we choose exactly one passport up to this $S_3$-action.  (For more
on passports, see section \ref{sec:passports}.)

A summary of the scope of our computation so far is given in \eqref{table:results}: we
list the number of passports of \Belyi\ maps for each degree $d$ and genus $g$
as well as the number of them that we have computed ({\color{ForestGreen}green} number).
Our data is available at \url{https://github.com/michaelmusty/BelyiDB}
and will hopefully also be available at \url{lmfdb.org} in the near future.

\subsection{Comparison}

Our database compares to the existing catalogues of \Belyi\ maps that are currently available as follows.
\begin{itemize}
  \item Birch \cite{Birch} computed a sampling of \Belyi\ maps of low degree
    and genus, for a total of $50$ passports.
  \item A \defi{Shabat polynomial} is a \Belyi\ map of genus $0$ that is
    totally ramified at $\infty$.  B\'etr\'ema--P\'er\'e--Zvonkin
    \cite{Betrema} computed all \emph{Shabat polynomials} up to degree $8$:
    there are
    $78$ such passports.
  \item A \Belyi\ map is \defi{clean} if every point above $1$ has ramification
    index $2$.   (A clean \Belyi\ map has even degree, and if $\phi$ is an
    arbitrary \Belyi\ map of degree $d$ then $4\phi(1-\phi)$ is a clean \Belyi\
    map of degree $2d$.) Adrianov~et~al.~\cite{Adrianov} computed
    all clean \Belyi\ maps up to degree $8$: there are 
    $67$ such passports.
  \item Malle \cite{Malle} computed fields of definition of some genus zero 
    passports whose permutation group is primitive, subject to some other 
    restrictions, up to degree $13$: there are hundreds of passports.
  \item
    Bose--Gundry--He \cite{bose} describe a partial catalogue of \Belyi\
    maps, inspired by considerations from gauge theory in physics. This database contains many genus $0$ maps
    up to degree $7$ and some maps in genus $1$ and $2$.
\end{itemize}
There are many other papers that compute certain classes of \Belyi\ maps:
for further reference, see
Sijsling--Voight \cite{SijslingVoight}.

\begin{equation} \label{table:results}
  \begin{tabular}{c||c|c|c|c|c|| c}
    \diagbox{$d$}{$g$} & $0$ & $1$ & $2$ & $3$ & $\geq 4$ & total\\
    \hline\hline
    $1$\rule{0pt}{2.5ex} & {\color{ForestGreen}1}$/1$ & $0$ & $0$ & $0$ & $0$ & {\color{ForestGreen}1}$/1$\\ 
    $2$ & {\color{ForestGreen}1}$/1$ & $0$ & $0$ & $0$ & $0$ & {\color{ForestGreen}1}$/1$\\ 
    $3$ & {\color{ForestGreen}2}$/2$ & {\color{ForestGreen}1}$/1$ & $0$ & $0$ & $0$ & {\color{ForestGreen}3}$/3$\\ 
    $4$ & {\color{ForestGreen}6}$/6$ & {\color{ForestGreen}2}$/2$ & $0$ & $0$ & $0$ & {\color{ForestGreen}8}$/8$\\ 
    $5$ & {\color{ForestGreen}12}$/12$ & {\color{ForestGreen}6}$/6$ & {\color{ForestGreen}2}$/2$ & $0$ & $0$ & {\color{ForestGreen}20}$/20$\\ 
    $6$ & {\color{ForestGreen}38}$/38$ & {\color{ForestGreen}29}$/29$ & {\color{ForestGreen}7}$/7$ & $0$ & $0$ & {\color{ForestGreen}74}$/74$\\ 
    $7$ & {\color{ForestGreen}89}$/89$ & {\color{ForestGreen}50}$/50$ & {\color{ForestGreen}7}$/13$ & {\color{ForestGreen}2}$/3$ & $0$ & {\color{ForestGreen}148}$/155$\\ %
    $8$ & {\color{ForestGreen}243}$/261$ & {\color{ForestGreen}83}$/217$ & {\color{ForestGreen}0}$/84$ & {\color{ForestGreen}0}$/11$ & $0$ & {\color{ForestGreen}326}$/573$\\ 
    $9$ & {\color{ForestGreen}410}$/583$ & {\color{ForestGreen}33}$/427$ & {\color{ForestGreen}0}$/163$ & {\color{ForestGreen}0}$/28$ & {\color{ForestGreen}0}$/6$ & {\color{ForestGreen}443}$/1207$\\ 
    \hline
    total & {\color{ForestGreen}802}$/993$ & {\color{ForestGreen}204}$/732$ & {\color{ForestGreen}16}$/269$ & {\color{ForestGreen}2}$/42$ & {\color{ForestGreen}0}$/6$ & {\color{ForestGreen}1024}$/2042$
  \end{tabular}
\end{equation}

\subsection{Outline}

The paper is organized as follows.  We begin in section \ref{sec:passports} by
defining passports and exhibiting an algorithm to enumerate their
representative permutation triples up to simultaneous conjugation.  In section
\ref{sec:KMSV}, we briefly recall the numerical method employed.  In section
\ref{sec:descent}, we treat the descent issues that arise.  In sections
\ref{sec:genusone}--\ref{sec:highergenus}, we detail steps that are specific to
elliptic and hyperelliptic curves, and provide examples of these computations.
We conclude in section \ref{sec:database} with a description of the database,
some statistics, and some final observations.

\subsection{Acknowledgements}
The authors would like to thank Hartmut Monien and Greg Warrington for useful
conversations; Mauricio Esquivel Rogel for his implementation of some numerical linear algebra routines, supported by a James O.~Freedman Presidential Scholarship; and Joshua Perlmutter for help in verification.  Thanks also to Maarten Derickx, Noam Elkies, and David P.\ Roberts for comments.  Our calculations are performed in the computer algebra system \textsf{Magma} \cite{Magma}.  Voight was supported by an NSF CAREER Award (DMS-1151047) and a Simons Collaboration Grant (550029).

\section{Passports} \label{sec:passports}

We begin by explaining the combinatorial (or topological) description of \Belyi\
maps and exhibit an efficient method for their enumeration.  For general
background reading, see Sijsling--Voight \cite[\S 1]{SijslingVoight} and the
references therein.

\subsection{Preliminaries} \label{subsec:intro}

Throughout, let $K \subseteq \C$ be a field.  A \defi{(nice) curve} over $K$ is
a smooth, projective, geometrically connected (irreducible) scheme of finite
type over $K$ that is pure of dimension $1$. After extension to $\C$, a curve
may be thought of as a compact, connected Riemann surface.  A \defi{\Belyi\
map} over $K$ is a finite morphism $\phi\colon X \to \PP^1$ over $K$ that is
unramified outside $\{0,1,\infty\}$; we will sometimes write $(X,\phi)$ when we
want to pay special attention to the source curve $X$.  Two \Belyi\ maps
$\phi,\phi'$ are \defi{isomorphic} if there is an isomorphism $\iota\colon X
\xrightarrow{\sim} X'$ of curves such that $\phi'\iota=\phi$.

Let $\phi\colon X\to\PP^1$ be a \Belyi\ map over $\Qbar$ of degree $d \in
\Z_{\geq 1}$.  The \defi{monodromy group} of $\phi$ is the Galois group
$\Mon(\phi) \colonequals \Gal(\C(X)\,|\,\C(\PP^1)) \leq S_d$ of the
corresponding extension of function fields (understood as the action of the
automorphism group of the normal closure); the group $\Mon(\phi)$ may also be
obtained by lifting paths around $0,1,\infty$ to $X$.

A \defi{permutation triple} of degree $d \in \Z_{\geq 1}$ is a tuple $\sigma =
(\sigma_0,\sigma_1,\sigma_\infty)\in S_d^3$ such that $\sigma_\infty \sigma_1
\sigma_0 = 1$. A permutation triple is \defi{transitive} if the subgroup
$\langle \sigma \rangle \leq S_d$ generated by $\sigma$ is transitive.  We say
that two permutation triples $\sigma,\sigma'$ are \defi{simultaneously
conjugate} if there exists $\tau\in S_d$ such that
\begin{equation}\label{eqn:simconj}
  \sigma^\tau \colonequals
  (\tau^{-1}\sigma_0\tau, \tau^{-1}\sigma_1\tau, \tau^{-1}\sigma_\infty\tau)
  = \left(\sigma'_0,\sigma'_1,\sigma'_\infty\right)
  = \sigma'.
\end{equation}
An automorphism of a permutation triple $\sigma$ is an element of $S_d$ that
simultaneously conjugates $\sigma$ to itself, i.e.,
$\Aut(\sigma)=Z_{S_d}(\langle \sigma \rangle)$, the centralizer inside $S_d$.

\begin{lem} \label{lem:simulisom}
  The set of transitive permutation triples of degree $d$ up to simultaneous
  conjugation is in bijection with the set of \Belyi\ maps of degree $d$ up to
  isomorphism.
\end{lem}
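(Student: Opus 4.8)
The plan is to establish this bijection by passing through the classical correspondence between Belyĭ maps and covers of the thrice-punctured sphere, using the fundamental group and its action on fibers. I would organize the proof around the diagram relating three equivalent categories: transitive permutation triples (up to simultaneous conjugation), finite covers of $\PP^1 \setminus \{0,1,\infty\}$ (up to isomorphism), and Belyĭ maps (up to isomorphism).

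\emph{From Belyĭ maps to triples.} Given a Belyĭ map $\phi \colon X \to \PP^1$ of degree $d$, restrict to obtain a finite topological covering $\phi \colon X \setminus \phi^{-1}(\{0,1,\infty\}) \to \PP^1 \setminus \{0,1,\infty\}$ of degree $d$. Fix a basepoint $* \in \PP^1 \setminus \{0,1,\infty\}$ and label the fiber $\phi^{-1}(*)$ by $\{1,\dots,d\}$. The fundamental group $\pi_1(\PP^1 \setminus \{0,1,\infty\}, *)$ is free on loops $\gamma_0, \gamma_1, \gamma_\infty$ encircling the respective punctures, subject to $\gamma_\infty \gamma_1 \gamma_0 = 1$. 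Lifting paths and tracking the induced permutation of the fiber gives a monodromy homomorphism $\pi_1 \to S_d$, hence a permutation triple $\sigma = (\sigma_0, \sigma_1, \sigma_\infty)$ satisfying $\sigma_\infty \sigma_1 \sigma_0 = 1$; the triple is transitive precisely because $X$ is connected. Here I would note that $\langle \sigma \rangle$ recovers $\Mon(\phi)$ as defined in the excerpt. The choice of labeling of the fiber is exactly a choice up to conjugation by $S_d$, and a different choice of basepoint or labeling changes $\sigma$ by simultaneous conjugation, so the class $[\sigma]$ is well defined.

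\emph{From triples to Belyĭ maps.} Conversely, given a transitive permutation triple $\sigma$, the action of $\pi_1(\PP^1 \setminus \{0,1,\infty\}, *)$ on $\{1,\dots,d\}$ through $\sigma$ determines, by the standard equivalence between finite covers and finite $\pi_1$-sets, a connected degree-$d$ cover $U \to \PP^1 \setminus \{0,1,\infty\}$. By the theory of Riemann surfaces (or by Grauert--Remmert, filling in the punctures), this cover extends uniquely to a finite branched cover $\phi \colon X \to \PP^1$ of compact Riemann surfaces unramified outside $\{0,1,\infty\}$, and by Riemann's existence theorem $X$ is an algebraic curve over $\C$, so $(X,\phi)$ is a Belyĭ map. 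I would check that these two constructions are mutually inverse: the monodromy of the extended cover reproduces $\sigma$, and conversely the cover associated to the monodromy of $\phi$ is isomorphic to $\phi$ over the punctured base, hence over all of $\PP^1$ by uniqueness of the extension.

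\emph{Matching the equivalence relations.} The crux — and the step I expect to be the main obstacle to write cleanly — is verifying that simultaneous conjugacy of triples corresponds exactly to isomorphism of Belyĭ maps. In one direction, an isomorphism $\iota \colon X \xrightarrow{\sim} X'$ with $\phi' \iota = \phi$ carries the fiber of $\phi$ over $*$ to the fiber of $\phi'$, intertwining the two monodromy actions and thus inducing a relabeling $\tau \in S_d$ with $\sigma^\tau = \sigma'$. In the other direction, a simultaneous conjugacy $\tau$ furnishes an isomorphism of the associated $\pi_1$-sets, which by the equivalence of categories yields an isomorphism of covers over $\PP^1 \setminus \{0,1,\infty\}$ commuting with the projections; this extends to an isomorphism of the compactifications compatible with $\phi, \phi'$. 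The care required is in pinning down basepoint independence and in confirming that the extension across the branch points is functorial, so that isomorphisms upstairs and downstairs correspond without ambiguity; once that bookkeeping is in place, the two assignments $[\phi] \mapsto [\sigma]$ and $[\sigma] \mapsto [\phi]$ are inverse bijections.
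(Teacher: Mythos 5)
Your proof is correct and follows the same route as the paper, which simply cites the monodromy correspondence of \cite[Lemma 1.1]{KMSV}; the covering-space/Riemann existence argument you write out is precisely the content of that cited lemma. The only cosmetic point is that $\pi_1(\PP^1\setminus\{0,1,\infty\},*)$ is free of rank $2$ (generated by $\gamma_0,\gamma_1$ with $\gamma_\infty$ defined by the relation), but this does not affect the argument.
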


\begin{proof}
  The correspondence is via monodromy \cite[Lemma 1.1]{KMSV}; in particular,
  the monodromy group of a \Belyi\ map is (conjugate in $S_d$ to) the group
  generated by~$\sigma$.
\end{proof}

The group $\Gal(\Qbar\,|\,\Q)$ acts on \Belyi\ maps by acting on the
coefficients of a set of defining equations; under the bijection of Lemma
\ref{lem:simulisom}, it thereby acts on the set of transitive permutation
triples, but this action is rather mysterious.

We can cut this action down to size by identifying some basic invariants, as
follows.  A \defi{passport} consists of the data $\calP=(g,G,\lambda)$ where $g
\geq 0$ is an integer, $G \leq S_d$ is a transitive subgroup, and
$\lambda=(\lambda_0,\lambda_1,\lambda_\infty)$ is a tuple of partitions
$\lambda_s$ of $d$ for $s=0,1,\infty$.  These partitions will be also be
thought of as a tuple of conjugacy classes $C=(C_0,C_1,C_\infty)$ by cycle
type, so we will also write passports as $(g,G,C)$. The \defi{passport} of a
\Belyi\ map $\phi\colon X \to \PP^1$ is $(g(X),\Mon(\phi),
(\lambda_0,\lambda_1,\lambda_\infty))$, where $g(X)$ is the genus of $X$ and
$\lambda_s$ is the partition of $d$ obtained by the ramification degrees above
$s=0,1,\infty$, respectively. Accordingly, the \defi{passport} of a transitive
permutation triple $\sigma$ is $(g(\sigma),\langle \sigma \rangle,
\lambda(\sigma))$, where (by Riemann--Hurwitz)
\begin{equation}
  g(\sigma) \colonequals 1-d+(e(\sigma_0)+e(\sigma_1)+e(\sigma_\infty))/2
\end{equation}
and $e$ is the index of a permutation ($d$ minus the number of orbits), and
$\lambda(\sigma)$ is the cycle type of $\sigma_s$ for $s=0,1,\infty$. The
\defi{size} of a passport $\calP$ is the number of simultaneous conjugacy
classes (as in \ref{eqn:simconj}) of (necessarily transitive) permutation
triples $\sigma$ with passport $\calP$.

The action of $\Gal(\Qbar\,|\,\Q)$ on \Belyi\ maps preserves passports.
Therefore, after computing equations for all \Belyi\ maps with a given
passport, we can try to identify the Galois orbits of this action.  We say a
passport is \defi{irreducible} if it has one $\Gal(\Qbar\,|\,\Q)$-orbit and
\defi{reducible} otherwise.

\subsection{Passport lemma}

To enumerate passports, we will use the following lemma.

\begin{lem}\label{lem:passportlemma}
  Let $S$ be a group, let $G \leq S$ be a subgroup, let $N \colonequals N_S(G)$
  be the normalizer of $G$ in $S$, and let $C_0,C_1$ be conjugacy classes in
  $N$ represented by $\tau_0,\tau_1 \in G$.  Let $Z_N(g)$ denote the
  centralizer of $g$ in $N$.
  Let
  \begin{equation}
    U \colonequals \{(\sigma_0,\sigma_1) \in C_0 \times C_1:
    \langle \sigma_0, \sigma_1 \rangle \subseteq G \}/\!\sim
  \end{equation}
  where $\sim$ indicates simultaneous conjugation by elements in $S$.
  Then the map
  \begin{equation} \label{eqn:ZNmap}
    \begin{aligned}
      u\colon Z_N(\tau_0) \backslash N / Z_N(\tau_1) &\to U \\
      Z_N(\tau_0)\nu Z_N(\tau_1) &\mapsto [(\tau_0,\nu\tau_1\nu^{-1})]
    \end{aligned}
  \end{equation}
  is surjective, and for all $[(\sigma_0,\sigma_1)] \in U$ such that $\langle
  \sigma_0,\sigma_1 \rangle = G$, there is a unique preimage under $u$.
\end{lem}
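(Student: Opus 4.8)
The plan is to establish the three claims in turn: that $u$ is well-defined, that it is surjective, and finally the uniqueness of preimages. First I would check well-definedness. Replacing the representative $\nu$ by $z_0 \nu z_1$ with $z_0 \in Z_N(\tau_0)$ and $z_1 \in Z_N(\tau_1)$ changes the second coordinate from $\nu\tau_1\nu^{-1}$ to $z_0\nu z_1\tau_1 z_1^{-1}\nu^{-1}z_0^{-1} = z_0(\nu\tau_1\nu^{-1})z_0^{-1}$, using $z_1\tau_1z_1^{-1}=\tau_1$. Simultaneous conjugation by $\tau = z_0 \in N \subseteq S$ (which fixes $\tau_0$ since $z_0$ centralizes it) carries $(\tau_0, z_0(\nu\tau_1\nu^{-1})z_0^{-1})$ back to $(\tau_0,\nu\tau_1\nu^{-1})$, so the class in $U$ is unchanged. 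The image genuinely lands in $U$: both $\tau_0 \in G$ and, since $\nu \in N = N_S(G)$ normalizes $G$, also $\nu\tau_1\nu^{-1} \in G$, whence $\langle \tau_0, \nu\tau_1\nu^{-1}\rangle \subseteq G$; moreover $\nu\tau_1\nu^{-1} \in C_1$ because $C_1$ is the $N$-conjugacy class of $\tau_1$.

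For surjectivity, take any class $[(\sigma_0,\sigma_1)] \in U$. Since $\sigma_0 \in C_0$ is $N$-conjugate to $\tau_0$, I would choose $\eta \in N$ with $\eta\tau_0\eta^{-1}=\sigma_0$ and conjugate simultaneously by $\eta$, replacing $(\sigma_0,\sigma_1)$ by $(\tau_0, \eta^{-1}\sigma_1\eta)$. Because $\eta \in N$, the new second coordinate remains $N$-conjugate to $\sigma_1 \in C_1$, hence lies in $C_1$, so it equals $\nu\tau_1\nu^{-1}$ for some $\nu \in N$. This exhibits $Z_N(\tau_0)\nu Z_N(\tau_1)$ as a preimage.

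The uniqueness claim is the heart of the argument. Suppose $Z_N(\tau_0)\nu Z_N(\tau_1)$ and $Z_N(\tau_0)\nu' Z_N(\tau_1)$ have the same image, with $\langle \tau_0, \nu\tau_1\nu^{-1}\rangle = G$. Then some $\tau \in S$ simultaneously conjugates $(\tau_0,\nu\tau_1\nu^{-1})$ to $(\tau_0,\nu'\tau_1\nu'^{-1})$. The first coordinate gives $\tau^{-1}\tau_0\tau=\tau_0$, so $\tau \in Z_S(\tau_0)$. Conjugation by $\tau$ sends $\langle \tau_0,\nu\tau_1\nu^{-1}\rangle = G$ onto $\langle \tau_0,\nu'\tau_1\nu'^{-1}\rangle$, so $\tau^{-1}G\tau = \langle \tau_0,\nu'\tau_1\nu'^{-1}\rangle \subseteq G$; in the finite setting $S = S_d$ a conjugate of $G$ contained in $G$ must equal $G$, forcing $\tau \in N_S(G) = N$ and hence $\tau \in Z_N(\tau_0)$. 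From the second coordinate, setting $w = \nu^{-1}\tau\nu' \in N$ I compute $w^{-1}\tau_1 w = \nu'^{-1}(\tau^{-1}\nu\tau_1\nu^{-1}\tau)\nu' = \nu'^{-1}(\nu'\tau_1\nu'^{-1})\nu' = \tau_1$, so $w \in Z_N(\tau_1)$. Since $\nu' = \tau^{-1}\nu w$ with $\tau^{-1} \in Z_N(\tau_0)$ and $w \in Z_N(\tau_1)$, the two double cosets coincide. \textbf{The main obstacle} is exactly this last upgrade from $\tau \in S$ to $\tau \in N$: it is where the hypothesis $\langle \sigma_0,\sigma_1\rangle = G$ (equality, not mere containment) is indispensable, since without it $\tau$ could conjugate the strictly smaller generated subgroup without normalizing $G$, and it is also where finiteness of $G$ enters through the implication $\tau^{-1}G\tau \subseteq G \Rightarrow \tau^{-1}G\tau = G$.
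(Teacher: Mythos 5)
Your proof is correct and follows essentially the same route as the paper's: surjectivity by conjugating the first coordinate onto $\tau_0$ inside $N$ and then realizing the second coordinate as $\nu\tau_1\nu^{-1}$, and uniqueness by showing the simultaneous conjugator fixes $\tau_0$ and normalizes $G$, then factoring it through the two centralizers. Your explicit use of finiteness to upgrade $\tau^{-1}G\tau \subseteq G$ to equality is a point the paper's proof passes over silently (it simply asserts both generated subgroups equal $G$ ``under the hypotheses on generation''), so your version is, if anything, slightly more careful at the one delicate step.
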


\begin{proof}
  The map \eqref{eqn:ZNmap} is well-defined, as $\nu \in N$ so
  $\nu\tau_1\nu^{-1} \in G$ and conjugacy classes are taken in $N$.

  We first show that \eqref{eqn:ZNmap} is surjective. Let $[(\sigma_0,
  \sigma_1)] \in U$. Then $g \sigma_0 g^{-1} = \tau_0$ for some $g \in N$, and
  so $[(\sigma_0,\sigma_1)]=[(\tau_0, g \sigma_1 g^{-1})] \in U$. Similarly,
  there is $h \in N$ such that $\sigma_1 = h\tau_1 h^{-1}$ so
  $[(\sigma_0,\sigma_1)]=[(\tau_0,(gh) \tau_1 (gh)^{-1})]$, and $gh=\nu \in N$.

  Next, we show \eqref{eqn:ZNmap} is injective when restricted to
  generating pairs. Suppose $[(\tau_0, \nu
  \tau_1 \nu^{-1})] = [(\tau_0, \mu \tau_1 \mu^{-1})] \in U$ with $\mu,\nu \in
  N$. Then there exists $\rho \in S$ with
  \begin{equation}
    \rho(\tau_0,\nu\tau_1 \nu^{-1})\rho^{-1} = (\tau_0,\mu\tau_1 \mu^{-1}).
  \end{equation}
  Since then $\rho \langle \tau_0, \nu\tau_1 \nu^{-1} \rangle \rho^{-1} = \rho
  G \rho^{-1} = \langle \tau_0, \mu \tau_1 \mu^{-1} \rangle = G$ under the
  hypotheses on generation, so we have $\rho \in N$. The equation in the first
  component reads $\rho\tau_0 \rho^{-1} = \tau_0$, so $\rho \in Z_N(\tau_0)$ by
  definition. The second equation yields
  \begin{equation}
    \begin{aligned}
      \rho \nu \tau_1 \nu^{-1} \rho^{-1} &= \mu\tau_1\mu^{-1} \\
      (\mu^{-1}\rho\nu)\tau_1(\mu^{-1}\rho\nu)^{-1} &= \tau_1
    \end{aligned}
  \end{equation}
  so $\mu^{-1} \rho \nu \in Z_N(\tau_1)$.  Writing $\nu = (\rho^{-1}) \mu
  (\mu^{-1} \rho \nu)$, we find that $Z_N(\tau_0) \nu Z_N(\tau_1) = Z_N(\tau_0)
  \mu Z_N(\tau_1)$, as desired.
\end{proof}

\subsection{Computing passports}

We now describe an algorithm to produce all passports for a given degree $d$
and a representative set of permutation triples in each passport up to
simultaneous conjugation.  We simplify this description by considering the
transitive subgroups of $S_d$ one at a time: these are currently available
\cite{Magma} up to degree $47$.

There is a natural 
permutation action of $S_3$ on passports and on the permutation triples in a passport,
corresponding to postcomposition of \Belyi\ maps by an automorphism of the base curve $\PP^1$
permuting $\{0,1,\infty\}$. 
For the purposes of tabulation, we will choose one
passport up to this action of $S_3$: to do so, we choose a total ordering
$\preceq$ on partitions (e.g., refining the dominance partial order).

\begin{alg} \label{alg:passportalg}
  Let $d \in \Z_{\geq 1}$, let $G\leq S_d$ be a transitive subgroup, and let $N
  \colonequals  N_{S_d}(G)$ be the normalizer of $G$ in $S_d$. This algorithm
  returns a representative list of passports for $G$ up to the action of $S_3$;
  and, for each passport, a representative list of permutation triples (one for
  each simultaneous conjugacy class).
  \begin{enumalg}
    \item Compute representatives $\{\tau_1,\dots,\tau_r\}$ for the conjugacy
      classes $\{C_1,\dots, C_r\}$ of $G$ up to conjugation by $N$.
    \item Out of the $r^2$ possible pairs of conjugacy class representatives,
      only consider pairs $(\tau_i,\tau_j)$ with
      $\lambda(\tau_i)\preceq\lambda(\tau_j)$.
    \item For each pair $(\tau_i,\tau_j)$ from Step 2, apply Lemma
      \ref{lem:passportlemma} to compute the set
      \begin{equation}
        U_{ij} \colonequals \{(\sigma_0,\sigma_1) \in C_i \times C_j:
        \langle\sigma_0,\sigma_1\rangle \subseteq G \}/\!\sim
      \end{equation}
      by computing the double coset $Z_N(\tau_i)\backslash N / Z_N(\tau_j)$ and
      applying the map $u$.  Complete each pair $(\sigma_0,\sigma_1)\in U_{ij}$
      to a permutation triple by setting $\sigma_\infty \colonequals
      (\sigma_1\sigma_0)^{-1}$, and let $T_{ij}$ denote the resulting set of
      triples obtained from $U_{ij}$.
    \item Keep only those triples $\sigma \in T_{ij}$ with $\langle \sigma
      \rangle = G$ and such that $\lambda(\sigma_1) \preceq \lambda
      (\sigma_\infty)$.
    \item Sort the triples obtained from Step 4 into passports by cycle
      structure.
  \end{enumalg}
\end{alg}

\begin{proof}[Proof of correctness]
  We compute all possible input pairs $(\tau_0,\tau_1)$ to Lemma
  \ref{lem:passportlemma} with $\lambda(\tau_0)\preceq\lambda(\tau_1)$. This
  accounts for all possible input pairs to Lemma \ref{lem:passportlemma} since
  every passport is $S_3$-equivalent to such a passport. We do not have control
  over the conjugacy class of $\sigma_\infty$ in this process, but Step 4
  insists that every resulting passport representative $\sigma$ has
  $\lambda(\sigma_0)\preceq\lambda(\sigma_1)\preceq\lambda(\sigma_\infty)$
  thereby ensuring a unique passport up to the action of $S_3$.
\end{proof}

We computed representatives for all passports (without equations) in degree $d\leq 11$
using Algorithm \ref{alg:passportalg}: this took about $18$ minutes for all degrees $d \leq 9$,
about $3.3$ hours for $d=10$, and $2.37$ days for $d=11$.

\section{Numerical computation of Belyi maps} \label{sec:KMSV}

With triples and passports in hand, we now briefly review the numerical method
used to compute \Belyi\ maps.

\subsection{Overview} \label{sec:overview}

The method of Klug--Musty--Schiavone--Voight \cite{KMSV} takes as input a
permutation triple $\sigma = (\sigma_0, \sigma_1, \sigma_{\infty})$ and
produces as output equations for the curve $X$ and \Belyi\ map $\phi\colon X
\to \PP^1$ over a number field $K \subseteq \C$ that corresponds to $\sigma$
(in the monodromy bijection of Lemma \ref{lem:simulisom}).

This method is numerical, so it is not guaranteed to terminate (because of loss
of precision or convergence issues), but when it terminates, it gives correct
output.  The method proceeds in the following steps.

\begin{enumalg}
  \item Form the triangle subgroup $\Gamma \leq \Delta(a,b,c)$ associated to
    $\sigma$ and compute its coset graph.

  \item Use a reduction algorithm for $\Gamma$ and numerical linear algebra to
    compute power series expansions of modular forms $f_i \in S_k(\Gamma)$ for
    an appropriate weight $k$.

  \item Use numerical linear algebra (and Riemann--Roch) to find polynomial
    relations among the series $f_i$ to compute equations for the curve $X$ and
    similarly to express the map $\phi$ in this model.

  \item Normalize the equations of $X$ and $\phi$ so that the coefficients are
    algebraic; recognize these coefficients as elements of a number field $K
    \subseteq \C$.

  \item Verify that $\phi$ has the correct ramification and monodromy.
\end{enumalg}

For the purposes of this article, the reader may treat this method as a black
box with two exceptions: in section \ref{sec:ahhhTheta} we describe an
improvement to the method in Step 4 for a choice of descent constant, and we
discuss a numerical test for hyperellipticity using power series in weight $2$
in section \ref{sec:numhyperl}.

\subsection{Discussion}

There are a few key advantages of the above algorithm for our purposes.  First,
it is uniform, and in particular does not require the permutation triple to
have a special form or for the curve to be of any particular genus.  Second, it
computes one \Belyi\ map at a time, without needing the whole passport: and in
particular, there are no \emph{parasitic solutions} (degenerate maps that arise
in other computational methods).  Third, we obtain the bijection between
triples and \Belyi\ maps by the very construction of the equations (and the
embedding $K \hookrightarrow \C$).

There is an alternative method due to Monien \cite{Monien, Co3} that uses
noncocompact triangle subgroups $\Gamma \leq \Delta(2,3,\infty) \simeq
\PSL_2(\Z)$ instead of our cocompact subgroups.  This method has been shown to
work in genus zero for maps of very large degree
(e.g. a \Belyi\ map with monodromy group isomorphic to
the Conway group $\mathrm{Co}_3$ is given in \cite{Co3}).

\section{Descent issues}\label{sec:descent}

In this section, we discuss issues of descent for \Belyi\ maps: when can a
\Belyi\ map be defined over a minimal field?  (The reader eager for \Belyi\ map
computations should skip this and proceed to the next section.)  A
satisfactory answer to this question is crucial for understanding the action
of $\Gal(\Qbar\,|\,\Q)$ on \Belyi\ maps.

\subsection{Field of moduli and field of definition}

Let $\sigma$ be a permutation triple with passport $\calP$ and corresponding
\Belyi\ map $\phi\colon X \to \PP^1$ over $\Qbar$.  The \defi{field of moduli}
$M(X,\phi) \subseteq \Qbar \subset \C$ of $\phi$ is the fixed field of $\{
\tau \in \Gal(\Qbar \,|\, \Q) : \tau (\phi) \simeq \phi \}$.  The field of
moduli is the intersection of all fields over which $(X,\phi)$ can be defined.

The degree of $M(X,\phi)$ is bounded above by the size of the passport $\calP$;
this bound is achieved if and only if the passport is irreducible.

\begin{defn}
  We say that $(X,\phi)$ \defi{descends} (to its field of moduli) if $(X,\phi)$ can be
  defined over its field of moduli $M(X,\phi)$, that is, if there exists a
  \Belyi\ map $\phi_K\colon X_K \to \PP^1$ over $K$ whose base change to
  $\Qbar$ is isomorphic to $\phi\colon X \to \PP^1$.
\end{defn}

Weil \cite{Weil} studied general conditions for descent.  For example, if
$\phi$ has trivial automorphism group $\Aut(\phi)$, then $\phi$ descends---this
criterion suffices to deal with a large majority of \Belyi\ maps.  More
generally, to descend the \Belyi\ map it is necessary and sufficient to
construct a \defi{Weil cocycle}, a collection of isomorphisms $f_{\sigma}\colon
\sigma(X) \to X$, one for every element $\sigma \in \Gal_{M(X,\phi)}
\colonequals \Gal(\Qbar \,|\, M(X,\phi))$, such that $f_{\sigma\tau} = f_\sigma
\sigma(f_{\tau})$ for all $\sigma, \tau \in \Gal_{M(X,\phi)}$.  (When
$\Aut(\phi)$ is trivial, this condition is satisfied for any collection of
isomorphisms $f_\sigma$.)  This criterion can be made explicit and computable
\cite[Method 4.1]{SijslingVoight}.


\subsection{Pointed descent}

There is another way to sidestep descent issues by rigidifying, as follows.

\begin{defn}
  A \defi{pointed \Belyi\ map} $(X, \phi; P$) is a \Belyi\ map $(X, \phi)$
  together with a point $P \in \phi^{-1}(\{0,1,\infty\}) \subseteq X(\Qbar)$.
  An isomorphism of pointed \Belyi\ maps $(X, \phi; P) \xrightarrow{\sim} (X',
  \phi'; P')$ is an isomorphism of \Belyi\ maps $\iota$ such that
  $\iota(P)=P'$.
\end{defn}


\begin{rmk}
  In our computations we choose the point $P$ to be one of the ramification points of $\phi$. Any point on $X$ would do, but only the ramification points can be seen from
  the combinatorial data.
\end{rmk}

\begin{defn}
  A \defi{pointed permutation triple} $(\sigma ; c)$ is a permutation triple
  $\sigma \in S_d^3$ together with a distinguished cycle $c$ in one of the
  permutations $\sigma_s$ with $s=0,1,\infty$; we call $s$ its \defi{base
  point} and the length of the cycle $c$ its \defi{length}. We call
  $(\sigma;c)$ a \defi{pointed refinement} of the permutation triple $\sigma$.

  Two pointed permutation triples $(\sigma; c)$ and $(\sigma';c')$ are
  \defi{simultaneously conjugate} if the permutation triples $\sigma,\sigma'$
  are simultaneously conjugate by an element $\tau \in S_d$ such that $c^\tau =
  c'$.  The automorphism group $\Aut(\sigma;c) \leq \Aut(\sigma)$ is the
  subgroup of $S_d$ that simultaneously conjugates $(\sigma;c)$ to itself.
\end{defn}

Returning to the correspondence of Lemma \ref{lem:simulisom}, we see that
pointed permutation triples of degree $d$ up to simultaneous conjugation are in
bijection with pointed \Belyi\ maps of degree $d$ up to isomorphism.

\begin{prop}
  The base point, length, and cardinality of the automorphism group of a
  pointed permutation triple are invariant under simultaneous conjugation and
  under the action of $\Gal(\Qbar\,|\,\Q)$.
\end{prop}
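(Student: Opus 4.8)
The plan is to treat the three quantities---base point, length, and $|\Aut(\sigma;c)|$---separately, and to handle the two group actions by different means: simultaneous conjugation by a direct computation in $S_d$, and the action of $\Gal(\Qbar\,|\,\Q)$ by transporting the statement through the pointed version of the monodromy bijection to the geometric side.

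First I would dispose of simultaneous conjugation. For $\tau \in S_d$ the pointed triple $(\sigma;c)$ is sent to $(\sigma^\tau; c^\tau)$ with $c^\tau = \tau^{-1}c\tau$. Since conjugation in $S_d$ preserves cycle type and carries a cycle of $\sigma_s$ to a cycle of $\sigma_s^\tau$, the distinguished cycle $c^\tau$ has the same length as $c$ and again lies in the $s$-component; hence both the base point $s$ and the length are unchanged. For the automorphism group a one-line check gives $\Aut(\sigma^\tau; c^\tau) = \tau^{-1}\Aut(\sigma;c)\tau$, so its cardinality is preserved. This step also records that all three quantities are genuinely functions on simultaneous conjugacy classes, which is what allows the $\Gal(\Qbar\,|\,\Q)$-action (defined on these classes via Lemma \ref{lem:simulisom}) to act on them in the first place.

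Next I would establish invariance under $\Gal(\Qbar\,|\,\Q)$ by passing to pointed \Belyi\ maps. Let $(X,\phi;P)$ be the pointed \Belyi\ map attached to $(\sigma;c)$, so that the base point equals $\phi(P)\in\{0,1,\infty\}$, the length equals the ramification index of $\phi$ at $P$, and $|\Aut(\sigma;c)| = |\Aut(X,\phi;P)|$ under the pointed bijection. For $\gamma \in \Gal(\Qbar\,|\,\Q)$ the conjugate object is $(\gamma(X), \gamma(\phi); \gamma(P))$. Because $0,1,\infty$ are $\Q$-rational they are fixed by $\gamma$, so $\gamma(\phi)(\gamma(P)) = \gamma(\phi(P)) = \phi(P)$ and the base point is preserved. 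The ramification index at $P$ is a local invariant that is unchanged under base change along the field automorphism $\gamma$, so the length is preserved. Finally $\iota \mapsto \gamma(\iota)$ is a group isomorphism $\Aut(X,\phi;P) \xrightarrow{\sim} \Aut(\gamma(X),\gamma(\phi);\gamma(P))$, whence the cardinality of the automorphism group is preserved as well.

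The main obstacle is conceptual rather than computational. The $\Gal(\Qbar\,|\,\Q)$-action on permutation triples is only defined through its geometric action on \Belyi\ maps and is otherwise opaque, so each of the three invariances under Galois has to be proved on the geometric side and then read back through the bijection. The single point that must be used with care is that $\{0,1,\infty\}$ is fixed pointwise by $\Gal(\Qbar\,|\,\Q)$: this is precisely what pins the base point, and it is also what guarantees that the pointed bijection is compatible with the Galois action, so that the combinatorial $\Aut(\sigma;c)$ and the geometric $\Aut(X,\phi;P)$ can be matched.
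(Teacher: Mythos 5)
Your proposal is correct and follows the same route as the paper: the paper likewise dismisses the simultaneous-conjugation case as clear (you supply the direct computation) and establishes Galois invariance by passing to pointed \Belyi\ maps, where the base point, ramification index, and automorphism group are visibly Galois invariant. Your write-up merely fills in the details the paper leaves implicit.
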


\begin{proof}
  The statements for simultaneous conjugation are clear.  For the Galois
  action, we pass back to \Belyi\ maps: the base point, the 
  ramification index, and the automorphism group of a pointed \Belyi\ map
  are Galois invariant.
\end{proof}

We similarly define the field of moduli $M(X,\phi;P)$ for a pointed \Belyi\ map.
The following theorem gives us a widely applicable criterion for
descent (even in the presence of automorphisms).

\begin{thm}\label{th:ptdesc}
  A pointed \Belyi\ map $(X,\phi;P)$ descends, i.e., the curve $X$, the map
  $\phi$, and the point $P$ can all be defined over $M(X,\phi;P)$.
\end{thm}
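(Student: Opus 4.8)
The plan is to apply the Weil cocycle criterion recalled above: writing $M \colonequals M(X,\phi;P)$ and $\Gamma \colonequals \Gal(\Qbar\,|\,M)$, it suffices to produce isomorphisms $f_\tau \colon {}^\tau(X,\phi;P) \xrightarrow{\sim} (X,\phi;P)$ for $\tau \in \Gamma$ satisfying $f_{\sigma\tau} = f_\sigma\, {}^\sigma(f_\tau)$. Such an $f_\tau$ exists for each $\tau$ by the very definition of the field of moduli of the pointed map. If $\Aut(X,\phi;P)$ were trivial then $f_\tau$ would be unique, the cocycle relation would hold automatically, and we would be done. However, this automorphism group can be nontrivial: the automorphism group of $(X,\phi)$ is the centralizer $Z_{S_d}(\langle\sigma\rangle)$, which acts freely on the sheets but may nevertheless stabilize the ramification point $P$ by rotating the cycle $c$ about it (for instance the cyclic cover $x \mapsto x^d$, totally ramified above $0$, has automorphism group $\Z/d\Z$ fixing the point above $0$). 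So I instead argue by induction on the degree $d$.

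If $\Aut(X,\phi)$ is trivial, then by Weil \cite{Weil} the unpointed map $\phi$ descends to $M(X,\phi)$, and on the resulting model the finitely many ramification points above $0,1,\infty$ form a $\Gal(\Qbar\,|\,M(X,\phi))$-set; the point $P$ is then defined over the fixed field of its stabilizer, which is exactly $M(X,\phi;P)=M$. Hence $(X,\phi;P)$ descends to $M$, settling the base case. If instead $A \colonequals \Aut(X,\phi)$ is nontrivial, form the quotient $\bar X \colonequals X/A$; since $\phi$ is $A$-invariant it factors as $\bar\phi\colon \bar X \to \PP^1$, again a \Belyi\ map (its branch locus is contained in that of $\phi$), now of degree $d/|A| < d$. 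Let $\bar P$ be the image of $P$, a point of $\bar X$ lying over the same rational branch point. Because this quotient is canonically attached to $(X,\phi;P)$, it is $\Gal(\Qbar\,|\,\Q)$-equivariant, so $M(\bar X,\bar\phi;\bar P) \subseteq M$; applying the inductive hypothesis, the pointed map $(\bar X,\bar\phi;\bar P)$ descends, giving an $M$-model $(\bar X_M, \bar\phi_M; \bar P_M)$ with $\bar P_M \in \bar X_M(M)$.

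It remains to descend the top layer $\rho\colon X \to \bar X$, which is Galois with group $A$, together with the marked point $P$ lying above the now $M$-rational point $\bar P_M$. This step is the heart of the matter, and the only place where the marking is essential: a priori the $A$-cover $\rho$ descends only up to a twist measured by a (nonabelian) class in $H^2(\Gamma, A)$, but the rational point $\bar P_M$ on the descended base, together with the distinguished point $P$ in its fiber, rigidifies the cover and trivializes this obstruction---this is exactly the mechanism of D\`ebes--Emsalem, and it produces a descent of $(X,\phi;P)$ to $M$. I expect this last step to be the main obstacle: one must check carefully that a single rational point in the base equipped with a chosen point above it suffices to kill the descent obstruction of the Galois layer, and that the resulting models glue compatibly with the marked point through the induction. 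Granting this, induction on $d$ completes the proof.
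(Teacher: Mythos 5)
The paper itself does not prove this theorem; it cites Birch \cite[Theorem 2]{Birch} and the branch-based constructive proof of Sijsling--Voight \cite[Theorem 1.12]{SijslingVoightBirch}. Your reduction is sensible in outline --- the trivial-automorphism case via Weil plus Galois-equivariance of the ramification locus is fine, and the quotient $\bar X = X/A$ with $\bar\phi$ of degree $d/\#A$ is a legitimate Belyi map to which induction applies --- but the argument has a genuine gap exactly where you flag it, and it is not a gap that ``careful checking'' of the D\`ebes--Emsalem mechanism will close in the form you state it. The claim that ``the rational point $\bar P_M$ on the descended base, together with the distinguished point $P$ in its fiber, rigidifies the cover'' is false as a rigidification statement: your own example $x \mapsto x^d$ marked at $P = 0$ has $\Aut(X,\phi;P) \simeq \Z/d\Z$ acting by $x \mapsto \zeta x$ and fixing $P$, and in that case the fiber of $\rho \colon X \to \bar X$ above $\bar P$ is the single totally ramified point $P$, so marking a point of the fiber carries no information at all. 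More generally, whenever $P$ is ramified in $\rho$ the marked point does not trivialize the stabilizer, so the Weil cocycle built from ``the'' isomorphisms $f_\tau$ is not automatically a cocycle, and the nonabelian $H^2$ obstruction you invoke is not visibly killed. The D\`ebes--Emsalem criterion \cite{DebesEmsalem} is stated for a rational point of the canonical quotient model \emph{off the branch locus}; here $\bar P_M$ lies under a ramification point of $\phi$ and may well be a branch point of $\rho$, so the hypothesis can fail in precisely the cases your induction is designed to handle.

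This is not a defect you can patch with the data you are carrying: the correct fix, and the reason the paper cites \cite{SijslingVoightBirch}, is to replace the marked \emph{point} by a marked \emph{branch} (a choice of local sheet, equivalently a compatible system of roots of a uniformizer at $P$). A branch is genuinely rigidifying --- no nontrivial automorphism of $(X,\phi)$ fixes a branch --- so the isomorphisms $f_\tau$ respecting the branch are unique, the cocycle condition holds for free, and Weil descent applies; one then checks that the field of moduli of the branched object equals $M(X,\phi;P)$. If you want to keep your inductive structure, the Galois-layer step should be run with branches rather than points; as written, the heart of the theorem is asserted rather than proved.
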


\begin{proof}
  The statement is given by Birch \cite[Theorem 2]{Birch}; for a constructive
  proof using branches, see Sijsling--Voight \cite[Theorem
  1.12]{SijslingVoightBirch}.
\end{proof}

\subsection{Pointed passports}

Given the simplicity and importance of Theorem \ref{th:ptdesc}, we refine our
notion of passport as follows.

\begin{defn}
  A \defi{pointed passport} is the data $(g,G,\lambda;c)$ where $(g,G,\lambda)$
  is a passport and $c=(s,e,a)$ consists of the data: $s \in \{0,1,\infty\}$,
  and $e \in \Z_{\geq 1}$ a summand in the partition $\lambda_s$, and finally
  $a \in \Z_{\geq 1}$.
\end{defn}

Given a pointed \Belyi\ map $(X,\phi;P)$, we define its \defi{pointed passport}
$\calP(X,\phi;P)$ to be its passport together with the data $s=\phi(P)$, the
ramification degree $e=e_\phi(P)$, and $a=\#\!\Aut(X,\phi;P)$.  Likewise, we define 
the pointed passport $\calP(\sigma;c)$ to be the passport with $s$ its base point, $e$ its length,
and $a$ the cardinality of its automorphism group.  We define the \defi{size}
of a pointed passport $\calP$ to be the number of isomorphism classes of
pointed \Belyi\ maps (equivalently, number of classes of pointed permutation
triples) with pointed passport $\calP$.

\begin{cor}\label{cor:fodbound}
  A pointed \Belyi\ map is defined over a field whose degree is at most the
  size of its pointed passport.
\end{cor}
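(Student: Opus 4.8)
The plan is to reduce the claim to a counting argument for a Galois orbit, using Theorem \ref{th:ptdesc} to convert a bound on the field of moduli into a bound on an actual field of definition. First I would invoke Theorem \ref{th:ptdesc}: the pointed \Belyi\ map $(X,\phi;P)$ descends to its field of moduli $M \colonequals M(X,\phi;P)$, so it suffices to show that $[M:\Q]$ is at most the size of the pointed passport $\calP \colonequals \calP(X,\phi;P)$.

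Next I would identify $[M:\Q]$ with the cardinality of a Galois orbit. By definition $M$ is the fixed field of the stabilizer
\begin{equation}
  H \colonequals \{\tau \in \Gal(\Qbar\,|\,\Q) : \tau(X,\phi;P) \simeq (X,\phi;P)\}
\end{equation}
of the isomorphism class $[(X,\phi;P)]$ under the Galois action on pointed \Belyi\ maps. The crucial finiteness input is that this action preserves pointed passports: the underlying passport is Galois-stable as noted in \S\ref{sec:passports}, and the base point, length, and cardinality of the automorphism group are Galois-invariant by the preceding Proposition. Hence the orbit of $[(X,\phi;P)]$ is contained in the set of isomorphism classes of pointed \Belyi\ maps with pointed passport $\calP$, a finite set whose cardinality is by definition the size of $\calP$.

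Finiteness of the orbit makes $H$ an open subgroup, so the Galois correspondence together with orbit--stabilizer gives that $[M:\Q] = [\Gal(\Qbar\,|\,\Q):H]$ equals the cardinality of the orbit. Combining this with the containment of the previous paragraph yields that $[M:\Q]$ is at most the size of $\calP$, and the descent supplied by Theorem \ref{th:ptdesc} then exhibits $(X,\phi;P)$ over a field of the required degree.

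I expect the only substantive ingredient to be Theorem \ref{th:ptdesc} itself: it is precisely what promotes a small field of moduli to a small field of definition, and without it the orbit count would bound only $[M:\Q]$. The remaining steps are the standard dictionary between field-of-moduli degree and Galois orbit size, exactly mirroring the unpointed bound recorded in \S\ref{sec:descent}; the one point meriting care is the passport-invariance that guarantees the orbit is finite (equivalently, that $H$ is open), which is supplied by the Proposition above.
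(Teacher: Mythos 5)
Your proposal is correct and follows the same route as the paper, whose entire proof is ``Apply Theorem \ref{th:ptdesc}''; you have simply made explicit the implicit step that the degree of the pointed field of moduli is bounded by the pointed passport size (via Galois-invariance of the pointed passport and the orbit--stabilizer dictionary), which the paper records for the unpointed case earlier in \S\ref{sec:descent} without proof. No gaps.
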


\begin{proof}
  Apply Theorem \ref{th:ptdesc}.
\end{proof}

\begin{prop}\label{prop:uniquep}
  If the size of $\calP(\sigma;c)$ is equal to the size of $\calP(\sigma)$,
  then all \Belyi\ maps with passport $\calP(\sigma)$ descend.
\end{prop}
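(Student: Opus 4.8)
The plan is to analyze the point-forgetting map $F$ that sends a pointed \Belyi\ map $(X,\phi;P)$ with pointed passport $\calP(\sigma;c)$ to the underlying \Belyi\ map $(X,\phi)$ with passport $\calP(\sigma)$, both taken up to isomorphism, and to show that the equality of sizes forces $F$ to be a bijection. Writing $c=(s,e,a)$, a fiber of $F$ over $(X,\phi)$ consists of the isomorphism classes of pointings with base point over $s$, ramification index $e$, and pointed automorphism count exactly $a$; combinatorially (via Lemma \ref{lem:simulisom}) these are the orbits of $\Aut(\sigma)=Z_{S_d}(\langle\sigma\rangle)$ on the length-$e$ cycles of $\sigma_s$ whose stabilizer has order $a$. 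Once $F$ is known to be a bijection, every \Belyi\ map in the passport carries a \emph{unique} pointing $P$ of type $c$; by the invariance of base point, length, and automorphism count under $\Gal(\Qbar\,|\,\Q)$ established above, the subgroup $\{\tau\in\Gal(\Qbar\,|\,\Q):\tau(\phi)\simeq\phi\}$ sends $P$ to another type-$c$ pointing of $\phi$, which by uniqueness is isomorphic to $P$. Hence this subgroup coincides with the stabilizer of $(X,\phi;P)$, so $M(X,\phi;P)=M(X,\phi)$, and Theorem \ref{th:ptdesc} yields descent of $(X,\phi;P)$, and a fortiori of $(X,\phi)$, to $M(X,\phi)$.

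The counting hypothesis enters exactly in proving bijectivity. Every map sharing the passport $\calP(\sigma)$ has monodromy group conjugate to $G$, hence the same automorphism order $|Z_{S_d}(G)|$, and the same partition $\lambda_s$, hence the same number $m$ of parts equal to $e$; thus all fibers of $F$ count orbits of groups of equal order acting on sets of equal size. Since $\Gal(\Qbar\,|\,\Q)$ preserves the type $c$, the fiber cardinality $f_i\colonequals\#F^{-1}(B_i)$ is constant on each $\Gal(\Qbar\,|\,\Q)$-orbit (irreducible component) of the passport. Writing the components as $\mathcal{O}_1,\dots,\mathcal{O}_r$ with $\#\mathcal{O}_t=n_t$ and $f\equiv\phi_t$ on $\mathcal{O}_t$, the size equality becomes $\sum_t n_t\phi_t=\sum_t n_t$, that is $\sum_t n_t(\phi_t-1)=0$. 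As each $n_t>0$, it suffices to show $\phi_t\geq 1$ for all $t$: non-negativity of the summands then forces $\phi_t=1$ throughout, and $F$ is a bijection.

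The main obstacle is precisely this surjectivity of $F$, i.e.\ ruling out a component with $\phi_t=0$ (no pointing of type $c$) compensated by one with $\phi_{t'}\geq 2$. In the generic case $|Z_{S_d}(G)|=1$ there is nothing to prove: the only possible type is $a=1$, every length-$e$ cycle is its own orbit, the hypothesis forces $m=1$, and these maps already descend by Weil's criterion since $\Aut(\phi)$ is trivial; the real content lies in the case of nontrivial automorphisms. There the value $a$ is the one realized by the given triple $(\sigma;c)$, and I would try to leverage the Galois-invariance of the \emph{entire} multiset of stabilizer orders for fixed $(s,e)$ to show that this multiset is uniform across components and contains $a$, so that $F$ is onto. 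Establishing this uniformity is the crux, and I expect it to require either a genuine compatibility input beyond pure orbit counting or, in practice, a direct verification that the forgetful map is surjective for the passport at hand.
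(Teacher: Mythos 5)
Your strategy is the same as the paper's: use the size hypothesis to force $M(X,\phi;P)=M(X,\phi)$ for a suitable pointing $P$ of each map in the passport, then apply Theorem \ref{th:ptdesc}. The paper's entire proof is two sentences (a field of definition of a pointed map is a field of definition of the underlying map, so the two moduli fields coincide ``by hypothesis''; now apply Theorem \ref{th:ptdesc}), and your first paragraph is a careful unwinding of exactly that.

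The difference is that you carry the counting far enough to see that it only closes if the forgetful map $F$ is surjective --- i.e.\ if every triple in the passport admits a pointed refinement of type $(s,e,a)$ --- since otherwise a Galois orbit with no type-$c$ pointing could be offset by one carrying two. You correctly identify this as the crux and you do not resolve it, so as written your argument is incomplete: $\sum_t n_t(\phi_t-1)=0$ gives nothing without $\phi_t\ge 1$. You should be aware, though, that the paper's proof does not address this point either; the assertion that the moduli fields ``coincide by hypothesis'' tacitly assumes the orbit-by-orbit comparison, which is your surjectivity statement in disguise. Two mitigating observations: when the passport is irreducible the issue disappears, since the Galois orbit of $(\sigma;c)$ already surjects onto the whole passport (this covers the size-$1$ applications in the paper); and in general all fibers of $F$ count orbits of the same group $Z_{S_d}(G)$ on the same number $m$ of length-$e$ cycles, which is the right raw material but does not by itself give the uniformity of stabilizer types across the passport that surjectivity requires. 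So: same approach as the paper, with a genuine, explicitly flagged gap at precisely the step the paper elides; closing it requires either the uniformity statement you sketch or a per-passport verification.
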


\begin{proof}
  Any field of definition of a pointed \Belyi\ map is also a field of
  definition of the underlying \Belyi\ map, so the fields of moduli and pointed
  moduli coincide by hypothesis.  Descent follows by Theorem \ref{th:ptdesc},
  since the moduli field of the pointed curve is a field of definition.
\end{proof}

It seems quite common for a permutation triple to have a pointed refinement
of size $1$.  The first example where no such refinement exists occur in
degree $8$: see Example \ref{ex:deg8} below.


\subsection{Descent from $\C$} \label{sec:ahhhTheta}

In Step 4 of our numerical method (see section \ref{sec:overview}), there is a
normalization procedure which we may interpret as an application of pointed
descent, as follows.  In the original method \cite[\S 5]{KMSV}, modular forms are expanded as power
series centered in a neighborhood of a ramification point of the form $|w| < 1$
in a parameter $w$, and the coefficients of these power series are renormalized
by writing them in terms of $\Theta w$ for a certain transcendental factor
$\Theta$, computed as the ratio of two `consecutive' terms in the power series
expansion.  In our setting, we instead normalize not the coefficients of the power series
but instead coefficients of the \Belyi\ map itself, now setting `consecutive'
coefficients equal.  In practice, we find that this normalization requires
smaller precision to recognize the \Belyi\ map exactly from its numerical
approximation.

\begin{rmk}
  In every example we computed, and in both ways of normalizing, we obtained
  normalized power series expansions that numerically agree with series defined
  over $M(X,\phi;P)$ with chosen ramification point $P$.  Currently this is
  only a numerical observation; but it is a sensible expectation, as the method
  works by computing the pluricanonical image using expansions at the designated point.
\end{rmk}

%

\begin{exm}
  Consider the passport $(1,S_5,(5^1, 4^1 1^1, 4^1 1^1))$.  The unique
  representative up to simultaneous conjugation is given by $\sigma$ with
  \begin{equation}
    \sigma_0 = (1\ 4\ 2\ 5\ 3), \quad
    \sigma_1 = (1\ 2\ 3\ 4), \quad
    \sigma_\infty = (1\ 2\ 3\ 5) \, .
  \end{equation}
  We take $c=(1\ 4\ 2\ 5\ 3)$, which has length $5$ and trivial automorphism
  group.
  Since the pointed passport also has size 1,
  the field of moduli of the \Belyi\ map equals $\Q$ by Corollary \ref{cor:fodbound},
  and we can descend to this field by Proposition \ref{prop:uniquep}.
%
  Computing with $50$ digits of precision (here and throughout, we only ever
  display $5$ digits), we find $X\colon y^2 = x^3 - 27 c_4 x - 54 c_6$ with
  \begin{equation}
    c_4 \approx 0.01030 + 0.00748 i
    \qquad \qquad
    c_6 \approx -0.00270 + 0.00196 i
  \end{equation}
  and \Belyi\ map $\phi$ with:
  \begin{equation}
    \phi \approx \frac{2.0000}{-1+(2.21275+0.71897i) y + (1.77422 i) xy}
    = \frac{2}{-1+b_3y+b_5xy}
  \end{equation}
  (where $i^2=-1$).
  The indeterminacy in this approximation is by $\lambda \in
  \C^\times$, acting according to the degree of the pole at $\infty$, so
  $(c_4,c_6) \leftarrow (\lambda^{-4} c_4, \lambda^{-6} c_6)$ and $(x,y)
  \leftarrow (\lambda^{-2} x, \lambda^{-3} y)$. Taking
  \begin{equation}
    \lambda \colonequals \frac{b_5}{b_3^2} \approx -0.19265-0.26516i
  \end{equation}
  the rescaled values $b_3' \colonequals \lambda^3 b_3 \approx 2^{16}/5^{10}$
  and $b_5' \colonequals \lambda^5 b_5 \approx -2^8/5^5$ have $(b_3')^2/b_5' =
  1$ (and there exists a descent with this ratio, defined over $\Q$). Now all
  the coefficients $a_0,b_3,b_5,c_4,c_6 \in \Q$ are easily identified. After
  computing a minimal model and swapping $0, \infty \in \PP^1$ for
  cosmetics, we obtain $X\colon y^2 = x^3 + 5x + 10$ with map
  \begin{equation}
    \phi(x,y) = ((x - 5)y+16)/32 \, .
  \end{equation}
\end{exm}

\subsection{Examples}

We now discuss some examples to see the various subtleties that play a role
when descending \Belyi\ maps.

\begin{exm}\label{ex:deg8}
  The first case of a passport for which Proposition \ref{prop:uniquep} does
  not apply occurs in degree $8$, given by $(1,V_4^2:S_3,(3^21^2,4^2,4^2))$.
  The passport is size $1$ but all pointed passports are size $2$.
  The \Belyi\ map descends because its automorphism group is trivial. A
  descent is given by $X\colon y^2 = x^3+x^2+8x+8$ and
  \begin{equation*}
    \phi(x, y) =\frac{4(7x^4 + 24x^3 + 92x^2 + 320x + 272) y - 16
    (x+1)(x^2+8)(x^2+16x+24)}{27 x^4 y}.
  \end{equation*}
  Because $\Aut(X,\phi)$ is trivial, this is the only model
  over $\Q$ up to isomorphism.  Finally, none of its ramification points is rational, so
  no descent of a pointed refinement immediately gets us to the field of moduli
  $\Q$.
\end{exm}

\begin{exm}
  The first dessin that does not descend to its field of moduli is of degree
  $16$.  Indeed, in lower degree, there are only three passports for which Proposition
  \ref{prop:uniquep} does not apply \emph{and} the automorphism group is
  nontrivial: all occur in degree $12$, one with size $1$, the other two of
  size $2$. Yet explicit calculation shows that these three examples all
  descend.

  For purposes of illustration, we consider the passport
  $(4,\textup{t12n57},(6^2,6^2,6^2))$ of size $2$, where $\textup{t12n57}$ denotes the transitive group in $S_{12}$ numbered $57$. 
  The passport is irreducible and the curves are nonhyperelliptic: they arise
  as degree $2$ covers branching at the ramification points of the unique
  \Belyi\ map with passport $(1,A_4(6),(3^2,3^2,3^2))$, given by $E \colon y^2=x^3+6x^2-3x$
  and \Belyi\ map $\phi(x,y)=(x^2+3)y/(8x^2)$.  
  The ramification points are then exactly the $\Q$-rational points $\infty$,
  $(0, 0)$, $(1, \pm 2)$, $(-3, \pm 6)$ on $E$. To construct the resulting
  degree $2$ cover, we choose a $4$-torsion point $P_4$ on $E$. Then the sum of
  the ramification points and $2 P_4$ is equivalent to $8 \infty$, so that we
  get a function whose square root gives rise to the requested cover. The four
  possible covers thus obtained are all Galois conjugate; we get the same
  \Belyi\ map, this from the curve
  \begin{equation}
    X \colon
    \begin{array}{l}
      y^2 = x^3 + 6 x^2 - 3 x, \\
      w^2 = y x^2 + 2 y x - 3 y + \alpha x^3 + 2 \alpha x^2 - 3 \alpha x ,
    \end{array}
  \end{equation}
  where $\alpha^4 - 12 \alpha^2 + 48=0$. The field $\Q (\alpha)$ contains $\Q
  (\sqrt{-3})$. This unique quadratic subfield is also the field of moduli of
  the \Belyi\ map from $X$, since one can show that it is mapped to its $\Q
  (\sqrt{-3})$-conjugate by the automorphism
  \begin{equation}\label{eq:twisttotake}
    (x, y, w) \mapsto \left(\frac{-3}{x}, \frac{3 y}{x^2}, \frac{3 iw}{x^2}\right)
  \end{equation}
  of $X$. To show that the \Belyi\ map descends, it suffices \cite[Cor.\
  5.4]{DebesEmsalem} (or \cite[Theorem 3.4.8]{SijslingVoightBirch}
  with $\mathcal{R} = \emptyset$) to show that the canonical model $E_0$ of $E$
  corresponding to the cocycle defined by the first two entries of
  \eqref{eq:twisttotake} has a rational point. It does; in fact $E_0$ is
  isomorphic to $E$. Still, none of the points on $E_0$ that correspond to the
  ramification points of $E$ are rational over $\Q (\sqrt{-3})$.  We conclude
  that there is no choice of \emph{pointed refinement} that will give rise to a
  descent to $\Q(\sqrt{-3})$ in this case, even though the \Belyi\ map
  descends.


%
\end{exm}

\section{Genus one} \label{sec:genusone}

In this section, we discuss some details for \Belyi\ maps of genus $1$.

\subsection{Newton's method}

Let $(X,\phi)$ be a \Belyi\ map with $X$ of genus $1$ defined by $X\colon y^2 =
f(x) = x^3 - 27c_4 x - 54c_6$. In our numerical method (see section
\ref{sec:overview}, or the Genus 1 subsection of \cite[\S5]{KMSV}), we compute
a numerical Weierstrass $X$ and \Belyi\ map $\phi$ on $X$ to arbitrary
precision.

Klug--Musty--Schiavone--Voight \cite[Example 5.28]{KMSV} describe how to use
Newton's method in the case of genus $0$ to achieve very accurate
approximations of the coefficients of the \Belyi\ map, allowing us to quickly
pass from tens of digits of precision to tens of thousands.  We now explain how
Newton's method can be extended to the case of genus $1$ \Belyi\ maps, ironing
out some wrinkles.



Let $P = (x_P, y_P) \in X(\C)$ be an affine point and let $t \colonequals x -
x_P$ and $s \colonequals y - y_P$. Insisting that $\phi$ have a zero or pole of
a given order at $P$ imposes equations that can be determined by working in the
completed local ring $\widehat{\mathbb{C}[X]}_P$ as follows.

If $P$ is not a $2$-torsion point of $X$, then $t$ is a uniformizer for
$\widehat{\mathbb{C}[X]}_P$.  We solve for $s$ in terms of $t$ by substituting
$x = t + x_P$ and $y = s + y_P$ into the equation for $X$, thereby obtaining a
quadratic equation in $s$ whose solution is
\begin{equation} \label{eqn:non-2-torsion}
  s \colonequals -y_P
  + y_P \sqrt{1+\frac{t^3+3 x_P t^2 + (3 x_P^2 - 27 c_4) t}{y_P^2}} \, .
\end{equation}
If instead $P$ is a $2$-torsion point, then $s$ is a uniformizer for
$\widehat{\mathbb{C}[X]}_P$; substituting as above, we obtain a cubic equation
in $s$, which we solve via Hensel lifting.  In either case, we may express the
numerator and denominator of $\phi$ as power series in the local parameter.
Once this has been accomplished, we obtain the equations imposed by a zero
(resp., pole) at $P$ of order $e_P$ by insisting that the first $e_P$
coefficients of the series for the numerator (resp., denominator) of $\phi$
vanish.

Newton's method has proven invaluable in our computations: it has allowed us to
compute genus $1$ maps that were previously out of reach, and has also sped up
our computations considerably. 

\subsection{Example}

We illustrate the above method with an example.

\begin{exm}
  Consider the passport $(1,S_7,(6^1 1^1, 6^1 1^1, 2^2 3^1))$ of size $13$.
  Its pointed refinement taking the $6$-cycle over $0$ also has size $13$.  A
  representative permutation triple is
  \begin{equation}
    \sigma_0 = (1\ 2\ 3\ 4\ 5\ 6), \qquad
    \sigma_1 = (2\ 7\ 6\ 3\ 4\ 5), \qquad
    \sigma_\infty = (1\ 7\ 2)(3\ 5)(4\ 6).
  \end{equation}
  This ramification data and a Riemann--Roch calculation implies that $\phi$
  can be written as $\phi = \phi_0/\phi_\infty$ for $\phi_0 \in \L(2 \infty)$
  and $\phi_\infty \in \L(8 \infty)$.  (For details, see section
  \ref{sec:computingbelyihyper} below.)
  Since $1, x$ and $1, x, y, x^2, xy, \ldots, x^4$ are bases for $\L(2 \infty)$
  and $\L(8 \infty)$, respectively, pulling out leading coefficients and
  changing notation, we write
  \begin{equation} \label{eqn:undetermined}
    \phi = u \frac{\phi_0}{\phi_\infty}
         = u \frac{a_0 + x}{b_0 + b_2 x + b_3 y + \cdots + b_7 x^2 y + x^4}
  \end{equation}
  for some $a_0, a_2, b_0, b_2 \ldots, b_7 \in \Qbar \subset \C$.
  Computing with $40$ digits of precision (displaying $5$), we find after 20
  seconds on a standard CPU the initial approximation for $X$ and $\phi$.
  After normalizing as in section \ref{sec:ahhhTheta} to
  obtain $b_7(=b_8)=1$, we obtain
  \begin{equation}
    \begin{aligned}
      c_4,c_6 &\approx -0.00031, 0.0000035 \\
      \phi &\approx 0.0024\frac{-0.18587 + x}{-0.00042 + 0.00112 x + \dots + 0.03839 x^3 + x^2y + x^4}.
    \end{aligned}
  \end{equation}
  Let $P = (x_P, y_P)$ be the point corresponding to the $3$-cycle in
  $\sigma_\infty$.  Since $P \in X(\C)$, our first equation is $y_P^2 = x_P^3 -
  27 c_4 x_P - 54 c_6$.  Computing $s$ as in \eqref{eqn:non-2-torsion}, we find
  \begin{equation}
    \begin{aligned}
      s &= \frac{\frac{3}{2} x_P^2 - \frac{27}{2} c_4}{y_P} t
        + \frac{-\frac{9}{8} x_P^4 + \frac{81}{4} c_4 x_P^2 + \frac{3}{2} x_P y_P^2
        - \frac{729}{8} c_4^2}{y_P^3} t^2\\
        &\qquad + \frac{\frac{27}{16} x_P^6 - \frac{729}{16} c_4 x_P^4
        +\dots+ \frac{81}{4} c_4 x_P y_P^2 + \frac{1}{2} y_P^4
        - \frac{19683}{16} c_4^3}{y_P^5} t^3 + O(t^4) \, .
    \end{aligned}
  \end{equation}
  Substituting $x = t + x_P$ and $y = s + y_P$ into the above expression for
  $\phi_\infty$ yields
  \begin{equation}
    \begin{aligned}
      \phi_\infty &= x_P^4 + x_P^3 b_6 + x_P^2 y_P b_7 + x_P^2 b_4 + x_P y_P b_5
                   + x_P b_2 + y_P b_3 + b_0\\
      & \qquad + \left(\textstyle{\frac{3}{2} x_P^4 b_7 + 4 x_P^3 y_P + \frac{3}{2} x_P^3
        +\dots+b_5 + y_P b_2 - \frac{27}{2} c_4 b_3}\right)\frac{t}{y_P}\\
      & \qquad + \left(\textstyle{-\frac{9}{8} x_P^6 b_7 - \frac{9}{8} x_P^5 b_5
        +\dots+\frac{729}{8} c_4^2 b_3}\right)\frac{t^2}{y_P^3} + O(t^3) \, .
    \end{aligned}
  \end{equation}
  To impose the condition that $\phi$ has a pole of order $3$ at $P$, we set
  the first three coefficients of $\phi_\infty$ equal to zero, giving $3$
  relations.

  Proceeding similarly with the other ramification points, we obtain $22$
  polynomial equations in the $23$ variables $u, c_4, c_6, a_0, b_0, \ldots,
  b_7$ and $x_P, y_P$ for each of the ramification points, other than the point
  corresponding to the cycle containing $1$ in $\sigma_0$. (The point
  corresponding to this cycle is $\infty$, and we have already imposed the
  condition that $\phi$ vanishes to order $6$ at $\infty$ by taking $\phi_0 \in
  \L(2\infty)$ and $\phi_\infty \in \L(8 \infty)$.) This system is
  underdetermined, so in order to apply Newton's method, we must find at least
  one more equation.  We observe that although $\phi$ is a degree $7$ map,
  $\phi_\infty$ has degree $8$, so there must be a common zero of $\phi_0$ and
  $\phi_\infty$. Calling this point $P_s = (x_s, y_s)$, we obtain $3$ more
  equations
  \begin{equation} \label{eqn:extraeqns}
    \begin{gathered}
      y_s^2 = x_s^3 - (27 c_4 x_s - 54 c_6) \qquad 0 = \phi_0(P_s) = a_0 + x_s \\
      0 = \phi_\infty(P_s) = b_0 + b_2 x_s + b_3 y_s + \cdots + b_7 x_s^2 y_s + x_s^4 \, .
    \end{gathered}
  \end{equation}
  We have adjoined two more variables $x_s, y_s$ and produced three more
  equations to ensure non-degeneracy. This produces a system of $25$ equations
  in $25$ variables. Applying Newton's method to this system, in 16.20 seconds
  we obtain approximations of coefficients with $2000$ digits of precision,
  which allows us to recognize the coefficients of $\phi$ as algebraic numbers.
  After a change of variables to reduce the size of the output, we find the
  elliptic curve
  \begin{equation}
    X \colon y^2 = x^3 - (24\nu+75)x + \textstyle{\frac{1}{2}}(-657\nu^2-1014\nu+3278)
  \end{equation}
  and \Belyi\ map $\phi=u\phi_0/\phi_\infty$ where $u=1/(2^9 3^2)$ and
  \begin{equation*}
      \begin{aligned}
        \phi_0 &= (-419\nu^2 - 358 \nu + 2947)+49x \\
        \phi_\infty &= (-806361\nu^2 - 724014\nu + 5449304) +(-3150\nu^2 - 15652\nu + 84560)x \\
        &\qquad     +(- 11310\nu^2 + 17940\nu + 118656)y     +(-33180\nu^2 + 74760\nu - 55104)x^2 \\
        &\qquad     +(59556\nu^2 - 189336\nu + 233856)xy     +(5166\nu^2 - 16380\nu + 20720)x^3 \\
        &\qquad     +(-59022\nu^2 + 184980\nu - 225792)x^2y  +(25557\nu^2 - 80122\nu + 97832)x^4
      \end{aligned}
  \end{equation*}
  over the number field $\Q(\nu)$ where $\nu^3 - 6 \nu + 12 = 0$.  It turns out
  that this passport decomposes into two Galois orbits, one of size $3$ as
  shown above, and the other of size $10$.  The coefficients of the \Belyi\ map
  for the size $10$ orbit are too large for us to display here, but it is
  defined over the number field $\Q(\mu)$ where
  \begin{equation}
    \mu^{10} - 2  \mu^9 + 15  \mu^8 - 78  \mu^7 + 90  \mu^6 + 48  \mu^5 + 90
    \mu^4 - 78  \mu^3 + 15  \mu^2 - 2  \mu + 1 = 0 \, .
  \end{equation}
\end{exm}

\begin{rmk}
  The ``extra zero" phenomenon in \eqref{eqn:extraeqns} is typical; it can be
  avoided in the special case when $0$ is totally ramified
  (i.e., when $\sigma_0$ is a $d$-cycle).
\end{rmk}


\section{Hyperelliptic curves} \label{sec:highergenus}

We now discuss some issues and improvements for hyperelliptic curves.

\subsection{Setup}

Recall that a hyperelliptic curve of genus $g \geq 2$ over $K$ has a model
\begin{equation} \label{hyperelliptic_eqn}
  X\colon y^2 + u(x)y = v(x)
\end{equation}
where $\deg(u) \leq g+1$ and $\deg(v) \leq 2g+2$. Letting $f(x) \colonequals
u(x)^2+4v(x)$, we have $f(x)$ separable with $\deg f(x)=2g+1$ or $2g+2$; we
refer to the model as \defi{even} or \defi{odd} according to the parity of
$\deg f(x)$. Note that an odd model has the single point $\infty = (1:1:0)$ at
infinity while an even model has two, $\infty' = (1:\sqrt{f_0}:0)$ and $\infty
= (1:-\sqrt{f_0}:0)$ where $f_0$ is the leading coefficient of $f(x)$
(I.e., the point $\infty$ is a Weierstrass
point if and only if the model is odd.)  In constructing the \Belyi\ map, in
both cases we take $\infty$ to be the marked point (around which we expand
series), and by convention it corresponds to the cycle containing $1$ in
$\sigma_0$.

\subsection{Numerical test for hyperellipticity} \label{sec:numhyperl}

Let $\Gamma$ be a triangle subgroup with $X=X(\Gamma)$ of genus $g \geq 2$.  We
test if $X$ is numerically hyperelliptic (in the sense the curve appears to be
hyperelliptic to the precision computed) as follows.  First, we
compute power series expansions of an \emph{echelonized} basis $f_1, f_2, \ldots, f_g$
of $S_2(X(\Gamma))$.  We have an isomorphism $S_2(X(\Gamma)) \cong \Omega(X(\Gamma))$ given by
$f(z) \mapsto f(z) \, \mathrm{d}z$
where $\Omega(X(\Gamma))$ is the $\C$-vector space of holomorphic differential
$1$-forms on $X(\Gamma)$.  If $X$ is hyperelliptic with model as in
\eqref{hyperelliptic_eqn}, since $f_1, \ldots, f_g$ is an echelonized basis we
have the further isomorphism
\begin{equation}
  \begin{aligned}
    \Omega(X(\Gamma)) &\overset{\sim}{\to} \Omega(X)\\
    f_i(z) \, \mathrm{d}z &\mapsto x^{g-i} \frac{\mathrm{d}x}{y}
  \end{aligned}
\end{equation}
for $i = 1, \ldots, g$.  Thus, to recover $x,y$ defined on
$X(\Gamma)$, we can take
\begin{equation} \label{eqn:xandy}
  x \colonequals f_1/f_2 \qquad \qquad y \colonequals x'/f_g
\end{equation}
where $x'$ denotes the derivative of $x$ with respect to $w$ (the coordinate in
the hyperbolic disc).  If the model is odd, then $\ord_\infty x=-2$ and $\ord_\infty y=-(2g+1)$;
if the model is even, then $\ord_\infty x = -1$ and $\ord_\infty y = -(g+1)$.

Consider the rational map $X(\Gamma) \to \mathbb{A}_{\C}^2$ with coordinates
$x,y$.  Using numerical linear algebra, we test if there is an approximate
linear relation among
\begin{equation}
  1, x, \ldots, x^{2g+2}, y, xy, \ldots, x^{g+1}y, y^2 \in \C[[w]].
\end{equation}
If there is such a relation, we obtain a rational map from $X$ to a
hyperelliptic curve $X' \subseteq \mathbb{A}^2$.  If $g(X') = g(X)$, then the
Riemann--Hurwitz formula implies that this map is birational, hence $X'$ is a
model of $X$ as in \eqref{hyperelliptic_eqn}.  If no such relation exists, then
we conclude that $X$ is not numerically hyperelliptic.


\subsection{Computing a hyperelliptic Belyi map} \label{sec:computingbelyihyper}

Suppose now that $X$ is hyperelliptic with model as in
\eqref{hyperelliptic_eqn}.  We compute the expression of the \Belyi\ map $\phi$
as a rational function in $x$ and $y$ roughly as follows.
\begin{enumerate*}
  \item Determine an appropriate Riemann--Roch space $\L(D)$.

  \item Compute a basis of $\L(D)$ in terms of $x$ and $y$.

  \item Using numerical linear algebra, express $\phi$ as a linear combination
    of functions in this basis.
\end{enumerate*}


We make this precise as follows, following Javanpeykar--Voight \cite[Lemma 3.2]{JV}.
Let $\sigma = (\sigma_0, \sigma_1, \sigma_\infty)$ be a transitive permutation
triple of degree $d$ with corresponding hyperelliptic 
\Belyi\ map $(X, \phi)$, and let $g$ be
the genus of $X$. Let $s$ be the length of the cycle containing $1$ in
$\sigma_0$ and let $k_1, \ldots k_r$ be the lengths of the remaining cycles in
$\sigma_0$.  Then the divisor of poles of $1/\phi$ is $\div_\infty(1/\phi) =
s\infty + \textstyle{\sum_{i=1}^r} k_i P_i$ for some points $P_1, \ldots, P_r \in X(\C)$.
Since we do not have control over the points $P_1, \ldots,
P_r$, we ``cancel" these poles by multiplying $\phi$ by a suitable function
$\phi_0$ that has zeroes at $P_1, \ldots, P_r$ and has poles only at $\infty$.
Such a $\phi_0$ will belong to the space $\L(D) \subseteq \L(t\infty)$ where
\begin{equation}
D \colonequals -\textstyle{\sum_{i=1}^r} k_i P_i + t \infty
\end{equation} for some (as of yet undetermined) $t \in
\mathbb{Z}_{\geq 0}$. Once we have obtained $\phi_0$, then $\phi_0/\phi \in
\L((s+t)\infty)$.  As we will describe in the next step, we can write down a basis
for Riemann--Roch spaces for divisors of the form $m \infty$.  This allows us to
compute $\phi_0$ and $\phi_\infty := \phi_0/\phi \in \L((s+t)\infty)$ with
respect to this basis. Thus we have $\phi = \phi_0/\phi_\infty$ for some
$\phi_0 \in \L(t\infty)$ and $\phi_\infty \in \L((s+t)\infty)$.

%
%
%

It remains to determine a value of $t$ so that such a $\phi_0$ exists.  To do
this, we apply Riemann--Roch to the divisor $D$.  Since $\textstyle{\sum_{i=1}^r} k_i = d-s$, this yields
\begin{equation} \label{eqn:RR}
  \begin{aligned}
    \ell(D) - \ell(K_X-D) = 1 - g + \deg(D) = 1 - g + (s - d + t)
  \end{aligned}
\end{equation}
where $K_X$ is a canonical divisor of $X$.  To ensure the existence of a
nonzero $\phi_0$ as above, we must have $\ell(D) \geq 1$.  By (\ref{eqn:RR}),
this holds if $1 - g + s - d + t \geq 1$, i.e., if $t \geq d - s + g$.  Thus we
may take $t = d - s + g$.  (This conclusion actually does not require $X$ to be
hyperelliptic.) 

%

Next, we explain how to compute bases for $\L(t\infty)$ and $\L((s+t)\infty)$
as in step 2.  In the case of an odd model, this basis is particularly simple:
$x$ and $y$ have poles at $\infty$ of orders $2$ and $2g+1$, respectively, so
\begin{equation}
  1, x, x^2, \ldots, x^{\lfloor m/2 \rfloor}, y, xy, \ldots, x^{\lfloor \frac{m - (2g+1)}{2} \rfloor} y
\end{equation}
is a basis for $\L(m \infty)$.  In the case of an even model the situation is
more complicated.  Now $x,y \not\in \L(m \infty)$ because they have poles at
$\infty'$.  We compute a basis for $\L(m \infty)$ as follows.  Since $x$ has a
simple pole at $\infty'$ then $t = 1/x$ has a simple zero, hence is a
uniformizing parameter at $\infty'$.
Working in the completed local ring $\widehat{\mathcal{O}}_{X,\infty'} \simeq
\C[[t]]$, we can express $y$ as a Laurent series in $t$ via
\begin{equation} \label{eqn:yexpsqrt}
  y = \frac{1}{2}\left(-u(1/t) \pm \sqrt{u(1/t)^2 + 4 v(1/t)}\right).
\end{equation}
We use the series expansions $x(w),y(w)$ at $\infty$ to match the correct sign
in \eqref{eqn:yexpsqrt}.
For each $j \in \{0, \ldots, m - (g+1)\}$ we compute the Laurent tail $P_j \in
\C[1/t]=\C[x]$ of $x^{j}y$, so that $x^{j}y - P_j$ is holomorphic at $\infty'$.
In this way we obtain the basis
\begin{equation}
  1, y - P_{0}, xy - P_1, \ldots, x^{m-(g+1)}y - P_{m-(g+1)}
\end{equation}
for $\L(m \infty)$.

\begin{exm}
  We illustrate the above procedure with an example.  Consider the passport
  $(2,G,(6^1,6^1,3^2))$, where $G \colonequals 2A_4(6) \simeq A_4 \times
  C_2$. The passport (and pointed passport) are size $1$, with representative
  triple
  \begin{equation}
    \sigma_0 = (1\ 6\ 2\ 4\ 3\ 5), \quad
    \sigma_1 = (1\ 3\ 5\ 4\ 6\ 2), \quad
    \sigma_\infty = (1\ 3\ 5)(2\ 4\ 6).
  \end{equation}
  Computing the coordinate functions $x,y$ as in \eqref{eqn:xandy} to $50$
  digits (displaying $5$), we find approximate series
  \begin{equation}
    \begin{aligned}
      x &\approx 0.99999 w^{-1} - 0.79370 w - 0.31498 w^3 + O(w^4)\\
      y &\approx -0.99999 w^{-3} - 0.79370 w^{-1} - 0.94494 w - 0.02142 w^3 + O(w^4) \, .
    \end{aligned}
  \end{equation}
  Since the series for $y$ has a pole of
  order $3 = g+1$, we are in the case of an even model.  Forming the matrix of
  coefficients of the monomials
  \begin{equation}
    1, x, x^2, x^3, x^4, x^5, x^6, y, xy, x^3 y, y^2 \, ,
  \end{equation}
  we find a hyperelliptic equation as in \eqref{hyperelliptic_eqn} with $u = 0$
  and
  \begin{equation}
    v \approx 1.00000  x^6 + 6.34960 x^4 + 15.11905 x^2 + 11.99999
  \end{equation}
  This gives the local expansion
  \begin{equation}
    \begin{aligned}
      y &= \sqrt{v(1/t)} = \sqrt{1.00000  t^{-6} + 6.34960 t^{-4} + 15.11905 t^{-2} + 11.99999}\\
        &= 1.00000 t^{-3} + 3.17480 t^{-1} + 2.51984 t - 1.99999 t^3 + O(t^4) \, .
    \end{aligned}
  \end{equation}
  Thus the Laurent tail of $y$ is $1.00000 x^{3} + 3.17480 x$, and the first
  nonconstant element of our basis for $\L(m \infty)$ for $m \geq 3$ is thus
  \begin{equation}
    \begin{aligned}
      y - (&1.00000 x^{3} + 3.17480 x)\\
        &\approx -2.00000 w^{-3} - 1.58740 w^{-1} + 0.62996 w - 0.04285 w^3 + O(w^4)
    \end{aligned}
  \end{equation}
  and we can compute the remaining elements of the basis similarly. Proceeding
  as explained above, we obtain the \Belyi\ map
  \begin{equation}
    \phi(x,y) = \frac{x^4 + 2x^2 + xy + 1}{2(x^2 + 1)^2}
  \end{equation}
  defined on the hyperelliptic curve $X\colon y^2 = x^6 + 4x^4 + 6x^2 + 3$.
\end{exm}


%
%
\section{Database} \label{sec:database}

\subsection{Technical description}
Our database is organized by passports as computed in Algorithm
\ref{alg:passportalg}. For each passport we store basic information such as
degree, genus, ramification indices, and the monodromy group.
We also store the
automorphism group and passport representatives, as well as their
pointed counterparts.
After computing equations for every \Belyi\
map in a passport, we store the \Belyi\ maps, curves, the fields
over which they are defined,
and the associated complex embedding.
We then
partition the pointed passport representatives into Galois orbits obtained from
this information. Lastly, the numerical power series and information to recover
the normalization in Section \ref{sec:overview} Step 4 are also saved.

\subsection{Running time}

Since our numerical method for computing equations sometimes requires a workaround for corner cases,
we do not have detailed information about the total running time.  
To give a rough idea of the running time, we consider some examples.
In \eqref{table:runtime} we list the approximate CPU time to
compute \emph{one} \Belyi\ map in the listed passport, with power series
computed to the specified number of decimal digits of precision and then precision obtained in Newton iteration.
\begin{equation} \label{table:runtime}
  \begin{tabular}{l|l|l|l}
    Passport&Size&Precision (Newton)&CPU Time\\
    \hline\hline
    $(0,A_9,(5^12^2,3^3,4^12^11^3))$&2&20 (1000) &7s\\
    \hline
    $(0,S_9,(7^12^1,4^12^11^3,4^12^21^1))$&23&20 (16000) &2m46s \\
    \hline
    $(1,A_7,(7^1,3^12^2,3^12^2))$&2&30 (1000)&23s\\
    \hline
    $(1,S_7,(5^12^1,5^12^1,3^12^2))$&4&40 (1500)&2m48s\\
    \hline
    $(1,A_7,(7^1,4^12^11^1,4^12^11^1))$&22&20 (1500)&10s\\
    \hline
    $(2,\GL_3(\F_2),(7^1,7^1,3^21^1))$&4&20&4m59s\\
  \end{tabular}
\end{equation}
The current database of \Belyi\ maps
consists of approximately $240$MB of text files.

\subsection{Observations} \label{sec:interesting}

Having completed a large scale computation of \Belyi\ maps, it remains to
analyze our data.  

\begin{itemize}
  \item
    The largest passport sizes in each degree are:
    \begin{equation} \label{table:passportsizes}
      \begin{tabular}{l|c|c|c|c|c|c|c|c}
        Degree&$\leq 4$&$5$&$6$&$7$&$8$&$9$&$10$&$11$\\
        \hline
        Passport size&$1$&$3$&$8$&$38$&$177$&$1260$&$8820$&$72572$
      \end{tabular}
    \end{equation}
  \item The largest degree number field arising as a field of definition of a \Belyi\ map in our database
    occurs for the passport $(1, S_7, (6^1 1^1, 6^1 1^1,
    4^1 2^1 1^1))$ which is irreducible of size $32$.  
    This degree $32$ number field has discriminant $2^{68} 3^{27} 5^9 7^{15}$
    and Galois group $\Z/2\Z\wr S_{16}$.


  \item
  	  The passport $(2, A_7, (7^1, 7^1, 5^1 1^1 1^1))$ provides an example of a highly reducible passport: it has size $24$ and decomposes into $6$ Galois orbits of sizes $1, 2, 3, 4, 6,$ and $8$.  The associated number fields are $\Q$, and those with defining polynomials $x^2 - x - 5, \quad x^3 + 2 x - 2, \quad x^4 - 2 x^3 - 2 x^2 + 3 x - 3, \quad
x^6 - 2 x^4 - 5 x^3 - 2 x^2 + 1$, and $x^8 - 4 x^7 + 14 x^5 - 35 x^4 + 42 x^3 - 126 x^2 + 108 x + 135$.  
  \item
    There are $262$ passports with degree $d\leq 7$.
    We have computed equations for all \Belyi\ maps in $255$
    of these passports and found that $37$ are reducible.
    For a passport $\mathcal{P}$ of size $l$,
    the Galois action determines a partition of $l$
    with parts $l_1,\dots,l_r$.
    To measure the irreducibility of $\mathcal{P}$,
    define
    \begin{equation}\label{eqn:wt}
      \mathrm{w}(\mathcal{P})\colonequals
      \begin{cases}
        1, &\text{ if } l=1;\\
        (l-1)^{-2}\textstyle\sum_{i=1}^r (l_i-1)^2, &\text{ if }l\geq 2.
      \end{cases}
    \end{equation}
    Let $\mathscr{P}_d$ be the set of passports
    with degree no larger than $d$ and define
    \begin{equation}\label{eqn:belyiconstant}
      \beta(d)\colonequals
      (\#\mathscr{P}_d)^{-1}\textstyle{\sum_{\mathcal{P}\in\mathscr{P}_d}\mathrm{w}(\mathcal{P})}.
    \end{equation}
    From the database we find that
    $\beta(d) = 1$ for $d\le 4$,
    $\beta(5) \approx 0.9393$,
    $\beta(6) \approx 0.9444$,
    and
    $0.8779<\beta(7)<0.9046$.
\end{itemize}





\end{document}